\newtheorem{theorem}{Theorem}[section]
\newtheorem{Lemma}[theorem]{Lemma}
\newtheorem{remark}[theorem]{Remark}
\theoremstyle{remark}
\newtheorem{rem}[theorem]{Remark}
\newtheorem{Example}[theorem]{Example}
\newcommand{\Aut}{\operatorname{Aut}}
\begin{document}

\title{Some surfaces with canonical maps of degree $10$, $11$ and $14$}
\author{ Federico Fallucca and Christian Glei\ss ner}

\thanks{
\textit{2020 Mathematics Subject Classification.} Primary: 14J29, Secondary: 14J10\\
\textit{Keywords}: Beauville surface; Product-quotient surface; Surface of general type; Canonical map \\
\textit{Acknowledgements:} The authors would like to thank  Roberto Pignatelli and Massimiliano Alessandro
 for useful comments and discussions. A special thanks goes to the organizers of the conference  \grqq Algabraic Geometry in Roma Tre, on the occasion of Sandro Verra's 70th birthday\grqq (June 2022), that enabled us to participate, present this work as a poster and provided financial support for our stay in Rome.}
\address{Federico Fallucca 
\newline Università di Trento, Via Sommarive 14, I-38123 Trento (TN), Italia}
\email{federico.fallucca@unitn.it}
\address{Christian Gleissner 
\newline University of Bayreuth, Universit\"atsstr. 30, D-95447 Bayreuth, Germany}
\email{Christian.Gleissner@uni-bayreuth.de}

\begin{abstract}
In this note we present examples of complex algebraic surfaces of general type with canonical maps of degree $10$, $11$ and $14$. They are constructed as quotients 
of a product of two Fermat septics using certain free actions of the group $\mathbb Z_7^2$.
\end{abstract}

\maketitle

\section{Introduction}

\noindent
Beauville has shown in \cite{B79} that if the image of the canonical map $\Phi_{K_S}$ of a surface $S$ of general type is a surface, then the following inequality holds:  
\[
d:=\deg(\Phi_{K_S}) \leq 9+\frac{27-9q}{p_g-2}\leq 36.
\]
Here,  $q$ is the irregularity and $p_g$ the geometric genus of $S$. 
In particular, $28 \leq d$ is only possible if $q=0$ and $p_g=3$.
Motivated  by this observation, 
the construction  of surfaces with $p_g=3$ and canonical map of degree $d$ for every  value $2 \leq d \leq  36$ is an 
interesting, but still widely open  problem \cite[Question 5.2]{MLP21}. In particular for most values   $10 \leq d$, it is not known if a surface realising that degree  exists. 
Indeed, for a long time, the only examples were the surfaces of 
Persson \cite{Per},  with canonical map of degree $16$ and Tan \cite{Tan}, with degree $12$.  
In recent years, this
problem attracted the attention of  many authors,  putting an increased effort in the construction of new examples. 
 As a result,  previously unknown surfaces with degree $d=12,16, 20, 24, 32$ and  $36$ 
have been discovered, by  Rito  \cite{Ri15, Ri17, Ri17Zwei, Ri19}, Gleissner, Pignatelli and Rito \cite{GPR} and  Nguyen \cite{Bin19, Bin21}. 
In this work, we present surfaces  with canonical maps of degree $d=10$, $11$ and $14$. According to our knowledge they are the first surfaces for those degrees. 
They  can be described using Pardini's theory of branched abelian covers \cite{Pa91}, which is one of the standard techniques in this subject, 
cf.  \cite{MLP21}. However, we decided to present them 
in an elementary way using plane curves and basic algebraic geometry at graduate textbook level \cite{Shafa}. Our construction is completely self-contained, basically reference free and easily  
accessible. It can be sketched  as follows:   the surfaces $S$, which all have $p_g=3$, arise  as quotients of a product of two Fermat septics 
\[
F= \lbrace x_0^7+x_1^7+x_2^7=0\rbrace \subset \mathbb P^2
\]
 modulo certain free and diagonal  actions of the group $\mathbb Z_7^2$. 
 Their explicit construction allows us to 
write the canonical system of each of them in terms of three $\mathbb Z_7^2$-invariant holomorphic two-forms on the product $F\times F$. 
It turns out that for none of them  $\vert K_{S}\vert$ is base-point free, i.e. the canonical map 
$\Phi_{K_{S}} \colon S \dashrightarrow \mathbb P^2$ is just a rational map. To compute its degree, we resolve
 the indeterminacy by a sequence of blowups and compute  the degree of the resulting morphism  via elementary intersection theory.  \\
We point out that our surfaces are particular examples of surfaces isogenous to a product, i.e. quotients of a product of two curves modulo a free  action of a finite group.
This construction goes back to an exercise in  Beauville's book on \emph{Complex Algebraic Surfaces} \cite[Exercises X.13 4]{Beauville},  where a free action of $\mathbb Z_5^2$  on a product of two Fermat quintics is used to 
construct a surface with 
 $p_g=q=0$. This example  served  as an important inspiration for our work.
By now, many more surfaces isogenous to a product  have been constructed.  Apart from other works, that mainly deal with irregular surfaces, we want to mention the 
complete classification of surfaces isogenous to a product with $p_g=q=0$ \cite{BCG} and the classification  for $p_g=1$ and  $q=0$ under the assumption that the action is diagonal  \cite{G15}.  
However, for higher values of $p_g$,   a classification of regular surfaces isogenous to a product is  much more involved and is  not yet established.  
Recently similar constructions involving non-free actions on a product of Fermat curves have been  used to provide 
other interesting projective manifolds that helped us to understand some important geometric phenomena.  Most notably 
are the rigid but not infinitesimally rigid manifolds \cite{BP21} of Bauer and Pignatelli that gave a  negative answer to  a question of Kodaira and Morrow \cite[p.45]{KM71} and, to a lesser degree, 
also the infinite series
 of $n$-dimensional infinitesimally rigid manifolds of general type with non-contractible universal cover for each $n\geq 3$,  provided  by 
 Frapporti and the second author of this paper \cite{FG}. 


\medskip
\noindent
{\bf Notation:} Throughout the paper a surface $S$ is a  projective manifold of dimension two. 
We use standard terminology in surface theory, cf. \cite{B79}.

\section{The surfaces}

\noindent
In this section we construct a series of surfaces $S$,  
 as quotients of a product of two Fermat septics $F$, 
modulo a suitable diagonal  action of the group $\mathbb Z_7^2$. For any surface $S$, 
we determine the canonical map $\Phi_{K_S}$ and compute its  degree. 
 
\noindent 
On the first copy of $F$ we define the action of $\mathbb Z_7^2$ as 
\[
\phi \colon \mathbb Z_7^2 \to \Aut(F), \quad (a,b) \mapsto [(x_0:x_1:x_2) \mapsto (x_0: \zeta_7^a x_1:  \zeta_7^b x_2)]. 
\]
This action has   $21$ points  with non trivial stabilizer. They form three orbits of length $7$. A representative of each orbit and a generator of the stabilizer is given by: 
\[
\begin{tabular}{c | c | c | c }
\makebox{point} & $(-1:0:\zeta_7)$ & $(-1:\zeta_7:0)$  & $ (0: -1 :\zeta_7)$   \\
\hline
\makebox{generator} & $ (1,0) $ & $(0,1) $  & $(6,6) $  \\
\end{tabular}
\]
Note that the automorphisms $\phi(a,b)$ are precisely the   deck transformations of the  cover
\[
\pi \colon F \to \mathbb P^1, \qquad (x_0:x_1:x_2) \mapsto (x_1^7:x_2^7).
\]
The cover  has  degree $49$ and  is branched along $(0:1)$, $(1:0)$ and $(-1:1)$. In particular 
$F/\mathbb Z_7^2 \simeq \mathbb P^1$ and $\pi$ is the quotient map. 

\noindent 
On the second copy of $F$, for which we use the homogenous variables $y=(y_0:y_1:y_2)$, the group   acts  by $\phi\circ A$, where $A\in \Aut(\mathbb Z_7^2)$ is an automorphism depending on the specific example. 
The explicit choices for $A$ are stated in the tables below.
To write the canonical systems of the corresponding unmixed quotients 
\[
S:=(F\times F)/\mathbb Z_7^2 \qquad \makebox{modulo the diagonal actions} \qquad \phi \times (\phi \circ A),
\]
we need to fix a suitable 
basis of the space $H^0(F,\Omega_F^1)$ of \emph{global holomorphic 1-forms} on $F$. In affine coordinates  such a basis is given by
\[
\lbrace \omega_{jk}:=u^jv^{k-6} du ~ \vert ~ j+k \leq 4\rbrace, \qquad \makebox{where} \qquad u:=\frac{x_1}{x_0} \quad \makebox{and} \quad   v:=\frac{x_2}{x_0}.
\]
Note that:
\begin{itemize}
\item[I)]
The action of $\mathbb Z_7^2$ on $H^0(F,\Omega_F^1)$ under pullback with $\phi$   is 
\[
 \phi(a,b)^{\ast}(\omega_{jk})=\zeta_7^{a(j+1)+b(k-6)}\omega_{jk}. 
\]
\item[II)] 
The space of canonical sections $H^0(K_{S})$ is isomorphic to 
$\big(H^0(\Omega_{F}^1) \otimes H^0(\Omega_{F}^1) \big)^{\mathbb Z_7^2}$, 
where  the action on the tensor product is diagonal, i.e.  $(a,b)\in \mathbb Z_7^2$ acts via 
\[
\phi(a,b)^{\ast} \otimes  \phi\big(A(a,b)\big)^{\ast}. 
\]
\end{itemize}

\noindent 
The observations I) and II) imply: 

\begin{Lemma}\label{congruence}
A basis of $H^0(K_S)$ is given by the $\mathbb Z_7^2$-invariant tensors $\omega_{jklm}:=\omega_{jk}\otimes \omega_{lm}$. A tensor $\omega_{jklm}$ is invariant if and only if 
for all $(a,b)\in \mathbb Z_7^2$  it holds: 
 \[
a(j+1)+b(k-6) + a'(l+1)+b'(m-6)
\equiv 0 ~ mod ~7,  \quad \makebox{where} \quad \begin{pmatrix} a' \\ b' \end{pmatrix}:=A
\begin{pmatrix} a \\ b \end{pmatrix}.
\]
\end{Lemma}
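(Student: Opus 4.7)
The plan is to combine observations I) and II) to reduce the lemma to a straightforward character computation, exploiting that the $\mathbb{Z}_7^2$-action is diagonalized by the basis $\{\omega_{jk}\otimes \omega_{lm}\}$.

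First I would note that since $\{\omega_{jk} : j+k\leq 4\}$ is a basis of $H^0(F,\Omega_F^1)$, the tensors $\omega_{jklm}=\omega_{jk}\otimes\omega_{lm}$ with $j+k\leq 4$ and $l+m\leq 4$ form a basis of $H^0(\Omega_F^1)\otimes H^0(\Omega_F^1)$. By observation II), it therefore suffices to determine which linear combinations of these basis tensors are fixed by the diagonal $\mathbb{Z}_7^2$-action $\phi(a,b)^{\ast}\otimes \phi\bigl(A(a,b)\bigr)^{\ast}$.

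Next I would apply observation I) to each factor: setting $(a',b')^{\top}:=A(a,b)^{\top}$, the action on the basis tensor $\omega_{jklm}$ is
\[
\bigl(\phi(a,b)^{\ast}\otimes \phi(A(a,b))^{\ast}\bigr)(\omega_{jklm}) = \zeta_7^{\,a(j+1)+b(k-6)+a'(l+1)+b'(m-6)}\,\omega_{jklm}.
\]
Hence each basis tensor is a simultaneous eigenvector for the entire abelian group $\mathbb{Z}_7^2$, and the representation decomposes into one-dimensional characters indexed by the quadruples $(j,k,l,m)$.

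The final step is then formal: in any representation of a finite group that is simultaneously diagonalized by a fixed basis, the invariant subspace is spanned by exactly those basis vectors on which every group element acts by $1$. Applying this, a tensor $\omega_{jklm}$ is invariant if and only if
\[
\zeta_7^{\,a(j+1)+b(k-6)+a'(l+1)+b'(m-6)}=1 \qquad \text{for all } (a,b)\in \mathbb{Z}_7^2,
\]
which is precisely the stated congruence. The invariant subspace is the span of all such $\omega_{jklm}$, giving the claimed basis of $H^0(K_S)$.

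There is no real obstacle here; the only point worth being careful about is making the reduction to eigenvector bookkeeping rigorous, i.e. invoking the fact that invariants of a diagonal action are spanned by the fixed basis vectors (equivalently, that projecting a general element onto an eigenline commutes with taking invariants).
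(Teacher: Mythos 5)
Your proof is correct and follows exactly the route the paper intends: the lemma is stated as an immediate consequence of observations I) and II), and your write-up simply makes explicit the character computation and the standard fact that the invariants of a simultaneously diagonalized action are spanned by the basis vectors with trivial character.
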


\noindent 
We can now state and prove our main result:

\begin{theorem}\label{MainTheo}
For all  $A\in \Aut(\mathbb Z_7^2)$ in  the table  below, 
the diagonal action $\phi \times (\phi \circ A)$ of $\mathbb Z_7^2$ on the product of two Fermat septics is free. The quotient is a regular smooth projective surface $S$ of general type with $p_g=3$.  
A basis of $H^0(K_S)$, the canonical map $\Phi_{K_S}$ in projective coordinates and its degree are stated  in 
the table:

\[
\begin{tabular}{c | c | c | c | c }
\makebox{No} & $A$ & \makebox{Basis of $H^0(K_S)$}  & $\Phi_{K_S}(x,y)$ & $\deg(\Phi_{K_S})$  \\
\hline
1. & $ \begin{pmatrix} 4&5 \\ 3 & 1\end{pmatrix} $ & $\lbrace \omega_{1304},\omega_{2210}, \omega_{3012}  \rbrace$ &   
$(x_1x_2^3y_2^4: x_1^2x_2^2y_0^3y_1: x_0x_1^3y_0y_1y_2^2)$ & $10$ \\ 
\hline
2. & $ \begin{pmatrix} 2&6 \\ 1 & 4\end{pmatrix} $ & $\lbrace    
\omega_{0011},
\omega_{1202},
\omega_{2040} \rbrace$ &  
$(x_0^4y_0^2y_1y_2:x_0x_1x_2^2y_0^2y_2^2: x_0^2x_1^2y_1^4)$ & $11$ \\
\hline
3. & $ \begin{pmatrix} 3&3 \\ 6 & 4\end{pmatrix} $ & 
$\lbrace   
\omega_{0103},
\omega_{1310}, 
\omega_{3031} \rbrace$  &   
$(x_0^3x_2y_0y_2^3:x_1x_2^3y_0^3y_1:x_0x_1^3y_1^3y_2)$  & $14$  \\ 
\hline
\end{tabular}
\]
\end{theorem}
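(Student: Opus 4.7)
The plan is to verify the assertions of the theorem in the order they are stated.

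\textbf{Freeness.} A point $(P,Q)\in F\times F$ has nontrivial stabilizer if and only if some $(a,b)\neq 0$ simultaneously fixes $P$ under $\phi$ and $Q$ under $\phi\circ A$. The three stabilizer subgroups on each factor under $\phi$ are $\langle (1,0)\rangle$, $\langle (0,1)\rangle$, and $\langle (1,1)\rangle$ (since $(6,6)\equiv -(1,1)\pmod 7$), and under $\phi\circ A$ a stabilizer at a special point is $A^{-1}$ applied to one of these three lines. As all are cyclic of prime order, freeness reduces to verifying that $A$ sends each of $\langle(1,0)\rangle$, $\langle(0,1)\rangle$, $\langle(1,1)\rangle$ to a line distinct from all three. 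This is a direct finite check on the given matrices.

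\textbf{Numerical invariants.} Since $F\times F\to S$ is étale of degree $49$ and $K_{F\times F}$ is ample, $K_S$ is ample and $S$ is of general type. By multiplicativity of $K^2$ and $e$,
\[
K_S^2=\frac{2\cdot 28^2}{49}=32,\qquad e(S)=\frac{28^2}{49}=16,\qquad \chi(\mathcal{O}_S)=4.
\]
To see $q(S)=0$, observation I) shows that the character of $\phi(a,b)$ on $\omega_{jk}$ is trivial for all $(a,b)$ only when $j\equiv 6$ and $k\equiv 6\pmod 7$, which is impossible for $j+k\le 4$. Hence $H^0(\Omega_F^1)^{\mathbb Z_7^2}=0$ for both factor actions, so $H^0(\Omega_S^1)=0$ and $p_g(S)=\chi-1+q=3$.

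\textbf{Basis and map.} By Lemma~\ref{congruence}, specializing $(a,b)=(1,0)$ and $(0,1)$, the invariance condition splits into two linear congruences mod $7$ on $(j,k,l,m)$, and a finite search over $j+k,\,l+m\le 4$ yields exactly the three tuples listed for each $A$. These tensors are linearly independent and therefore form a basis of the three-dimensional space $H^0(K_S)$. To express $\Phi_{K_S}$ in projective coordinates, I identify $H^0(K_F)$ with $H^0(\mathbb{P}^2,\mathcal{O}(4))$ via the Poincaré residue: $\omega_{jk}$ corresponds (up to scalar) to $x_0^{4-j-k}x_1^{j}x_2^{k}$, and hence $\omega_{jklm}$ corresponds to the bi-homogeneous monomial of bidegree $(4,4)$ in $(x,y)$ listed in the table.

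\textbf{Degree.} In each case the three defining monomials share a single linear factor in the $x$-variables ($x_1$, $x_0$, $x_2$ for cases $1,2,3$ respectively), whose vanishing locus on $F\times F$ is $\mathbb Z_7^2$-invariant and descends to the fixed part $Z\subset S$ of $|K_S|$. The moving part $M=K_S-Z$ pulls back to a divisor of bidegree $(3,4)$ on $F\times F$, so
\[
M^2=\frac{(3H_1+4H_2)^2}{49}=\frac{24\cdot 49}{49}=24.
\]
After resolving the remaining, now isolated, base points of $|M|$ by a sequence of blowups, $\deg \Phi_{K_S}=M^2-\sum m_i^2$, where the $m_i$ are the multiplicities of the isolated base points on $S$. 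The expected main obstacle is the case-by-case identification of these base points: they are the common zeros of the reduced monomials $Q_i=P_i/(\text{common factor})$ on $F\times F$, organized into $\mathbb Z_7^2$-orbits of size $49$, and their squared multiplicities must sum to $14$, $13$, $10$ respectively to produce the claimed degrees $10$, $11$, $14$. Once these local multiplicities are determined by standard blowup computations, the theorem follows.
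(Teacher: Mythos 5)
Your treatment of freeness, the numerical invariants, the basis of $H^0(K_S)$ and the translation into bi-quartics is sound and matches the paper's argument (the paper deduces $q=0$ from $F/\mathbb Z_7^2\simeq\mathbb P^1$ rather than from the character computation, but these are equivalent). The problems are all in the degree computation, which is the actual content of the theorem.

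First, a concrete factual error: in case 3 the three monomials $x_0^3x_2y_0y_2^3$, $x_1x_2^3y_0^3y_1$, $x_0x_1^3y_1^3y_2$ have \emph{no} common factor (the exponent of $x_2$ in the third monomial is $0$), so $|K_S|$ has empty fixed part there, $M^2=K_S^2=32$, and the base-point corrections must sum to $18$, not $10$. Second, and more seriously, the formula $\deg\Phi_{K_S}=M^2-\sum m_i^2$ with $m_i$ the multiplicities of the base points \emph{on $S$} is wrong: the correction from a base point is the sum of the squared multiplicities over the whole tree of infinitely near base points appearing in the resolution. For instance, for surface 1 at $p_{12}=F_1\cap G_2$ the three divisors are locally $4G_2$, $F_1$ and $2F_1+2G_2$, so the multiplicity on $S$ is $1$, yet the total correction is $4$ because each blowup creates a new base point on the exceptional curve. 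Carrying out these cascades is precisely what the paper's Lemma \ref{FedericoLemma} does (it shows the total correction is $ab$ for a local configuration $aH$, $bK$, $cH+dK$ under explicit hypotheses, and Remark \ref{CorrTerDeg7} shows the answer genuinely changes when those hypotheses fail). Your proposal defers exactly this step --- ``once these local multiplicities are determined \dots the theorem follows'' --- so the degrees $10$, $11$, $14$ are never actually established. To close the gap you would need to list the base points $F_i\cap G_j$ of the mobile part in each case, verify the local normal-crossing form of the three divisors there, and run the inductive blowup computation (or invoke a lemma like Lemma \ref{FedericoLemma}) to get the corrections $4+3+3+4=14$ for case 1, $13$ for case 2, and $18$ for case 3.
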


\begin{proof}
First we show that 
the three diagonal actions $\phi \times (\phi \circ A)$  on $F\times F$ are free. Indeed, 
as remarked above, the non-trivial stabilizers of the points on the first copy of $F$ are  generated by $(1,0)$, $(0,1)$ and $(6,6)$. However, none of these elements have a fixed point on the second copy of $F$ 
under the twisted actions $\phi \circ A$. 
Thus, the actions are free and the quotient surfaces $S$ are smooth, projective and of general type. The latter holds because the genus of the Fermat septic is  $g(F)=15 \geq 2$.
Moreover, they are regular surfaces, because they do not possess any non-zero holomorphic one-forms, since  $F/\mathbb Z_7^2$ is biholomorphic to $\mathbb P^1$. 
The geometric genus of each  $S$ is therefore equal to 
\[
p_g=  \chi(\mathcal O_{S})- 1 = \frac{(g(F)-1)^2}{\vert \mathbb Z_7^2\vert}-1=\frac{14^2}{49}-1=3. 
\]
Using Lemma  \ref{congruence}, we compute a basis of $H^0(K_S)$ for each surface $S$. 
Replacing the affine variables  by $\frac{x_i}{x_0}$ and $\frac{y_j}{y_0}$ and multiplying by $x_0^4y_0^4$ we obtain the 
 bi-quartics that define the canonical map.  \\
It remains to determine the degree of $\Phi_{K_S}$ for each surface $S$. For this purpose we 
\emph{resolve  the indeterminacy} of these maps 
by a sequence of  blowups, as explained in the textbook \cite[Theorem II.7]{Beauville}:

\[
\xymatrix{
\hat{S} \ar[r] \ar[dr]_{\Phi_{\hat{M}}} & S\ar@{-->}[d]^{\Phi_{K_S}} \\
& \mathbb P^2.
}
\]
Here, $\vert \hat{M} \vert$ is a base-point free linear system.  
The     self-intersection $\hat{M}^2$ is positive if and only if 
$\Phi_{\hat{M}}$ is not composed by a pencil. In this case $\Phi_{\hat{M}}$  is onto and  
 it holds: 
\[
\deg(\Phi_{K_S})=\deg(\Phi_{\hat{M}})=\hat{M}^2. 
\]
For the computation of the resolution,  it is convenient to write the divisors of the  bi-quartics defining  $\Phi_{K_S}$  as linear combinations of the 
reduced curves  $F_j:=\lbrace x_j=0\rbrace$ and $G_k:=\lbrace y_k=0\rbrace$ on $S$. 
Note that $F_j$ and  $G_k$ intersect transversally in only one point  and  $(F_j,F_k)=(G_j,G_k)=0$, for all  $j,k$.  
Thus, these curves  can be illustrated as a grid of three vertical and three horizontal lines.  \\
As an example, consider the first surface in the table. 
Here, the divisors of the three bi-quartics spanning  the canonical system  $\vert K_{S}\vert$ are: 
\[
F_1+ 3F_2+4G_2, \qquad 
2F_1+2F_2+3G_0+G_1 \qquad \makebox{and} \qquad 
F_0+3F_1+G_0+G_1+2G_2. 
\]
The fixed part of $\vert K_S\vert$ is $F_1$ and the mobile part $\vert M \vert $  has precisely four base-points: 
\[
F_1\cap G_2= \lbrace p_{12} \rbrace, \quad
F_2\cap G_0 = \lbrace p_{20}\rbrace, \quad 
F_2\cap G_1 = \lbrace p_{21} \rbrace \quad \makebox{and} \quad 
F_2\cap G_2= \lbrace p_{22} \rbrace.
\]
We blow up these points, take the pullback of the mobile part $\vert M \vert$  of $\vert K_{S}\vert$
and remove the fixed part of this new linear system. We repeat the procedure, until we obtain a  base-point free  linear system 
$\vert\hat{M}\vert $. The degree of the canonical map can then be  computed as  
$\deg(\Phi_{K_S})=\hat{M}^2$.  
Alternatively, we can shortcut the computation by  using Lemma  \ref{FedericoLemma} from below, which tells us  the 
 contribution of the difference $M^2- \hat{M}^2$ coming from each base-point simply by looking at the coefficients of the divisors 
  spanning   the  mobile part 
 of $\vert K_S\vert$.  See Example \ref{FedericoExample}  for an illustration. \\
Fortunately  the conditions of Lemma  \ref{FedericoLemma} on the coefficients of the divisors are fulfilled also for any other surface of the table, providing us with an easier way to compute the degree of the canonical map.

\end{proof}

\begin{Lemma}\label{FedericoLemma} 
Let  $\vert M \vert $ be a two-dimensional linear system on a surface $S$, with only isolated base-points, which is spanned by $D_1$, $D_2$ and $D_3$. Assume that in a neighborhood of a basepoint $p$, we can write  the divisors 
$D_i$ as 
\[
D_1=aH, \quad D_2=bK \quad \makebox{and} \quad D_3= cH+d K, 
\]
where $H$ and $K$ are reduced, smooth and intersect transversally at $p$ and $a,b,c,d$ are non-negative integers, $b\leq a$.
Assume that 
\begin{itemize}
    \item $d\geq b$ or
    \item $b\neq 0$ and $c+md\geq a$, where $a=mb+q$ with $0\leq q<b$.
\end{itemize}
Then after blowing up at most $(ab)$-times we obtain a new linear system $\vert \hat{M} \vert $ such that no infinitely near point of 
$p$ is a base-point of $\vert \hat{M} \vert $. Moreover $\hat{M}^2 =M^2-ab$.

\end{Lemma}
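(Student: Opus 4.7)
The plan is to resolve the infinitely near base points above $p$ by a sequence of blowups, showing that each blowup strips off $bE$ from the fixed part, produces at most one new base point, and preserves the hypothesis of the lemma with strictly smaller parameters. An Euclidean-algorithm bookkeeping then gives both the bound $n\leq ab$ on the number of blowups and the identity $\hat M^2=M^2-ab$. The hard part will be verifying that each blowup, possibly after a swap of roles, lands the configuration back in Case~A or Case~B so that the induction closes up.

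Working in local analytic coordinates $(x,y)$ at $p$ with $H=\{x=0\}$ and $K=\{y=0\}$, let $\pi\colon \tilde S\to S$ be the blowup with exceptional divisor $E$. Then
\[
\pi^*D_1 = a\tilde H + aE,\qquad \pi^*D_2 = b\tilde K + bE,\qquad \pi^*D_3 = c\tilde H + d\tilde K + (c+d)E.
\]
The first observation is that $\min\{a,b,c+d\}=b$: since $b\leq a$ it suffices to check $c+d\geq b$, which is trivial in Case~A, while in Case~B the inequality $c+md\geq mb+q$ with $d<b$ forces $c\geq q+m(b-d)$, so $c+d\geq q+(m-1)(b-d)+b\geq b$. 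Removing the fixed part $bE$ produces a new mobile system $|M_1|$ with $M_1^2=M^2-b^2$ via the standard identity $(\pi^*M-bE)^2=M^2-b^2$.

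Next I analyse the base points of $|M_1|$ near $E$. The divisor $D_2^{(1)}=b\tilde K$ vanishes only at $q_K:=E\cap \tilde K$, so $q_K$ is the only candidate. Using $E$ and $\tilde K$ as the transverse pair there, the three divisors read $(a-b)E$, $b\tilde K$ and $(c+d-b)E+d\tilde K$; this yields new parameters $(a-b,b,c+d-b,d)$ if $a-b\geq b$, or, after swapping the roles of $(D_1,H)$ and $(D_2,K)$, $(b,a-b,d,c+d-b)$ if $a-b<b$ (and the process terminates at $q_K$ when $a=b$). In the no-swap regime Case~A is stable because $b,d$ do not change, and in Case~B the identity $c_{\text{new}}+(m-1)d = c+md-b\geq a-b$ propagates Case~B with $m\mapsto m-1$. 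In the swap regime the equivalence $c+d\geq a\iff c+d-b\geq a-b$ sends Case~B with $m=1$ to Case~A, and the inequality $c+md-mb\geq q$, which is Case~B unwrapped, takes the configuration at the end of the Case~B chain into Case~A with strictly smaller $(a,b)$. The two cases were tailored precisely so that these transitions close up.

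The iteration thus mirrors the Euclidean algorithm on $(a,b)$: each blowup contributes $\mu^2$ to the decrease in $M^2$, with $\mu$ running through the values $b,b,\ldots,r_1,r_1,\ldots$ (the remainders of the algorithm, each repeated as many times as its quotient). The classical identity $\sum_i q_i r_i^2 = ab$ (with $r_{-1}=a$, $r_0=b$, $r_{i+1}<r_i$) yields $\hat M^2 = M^2 - ab$, while $r_i\geq 1$ gives $\sum q_i \leq \sum q_i r_i^2 = ab$, so the total number of blowups is at most $ab$ as claimed.
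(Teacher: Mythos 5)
Your overall strategy coincides with the paper's proof: blow up, strip the fixed part $bE$, observe that the unique new base point is $E\cap\tilde K$ with local data $(a-b)E$, $b\tilde K$, $d\tilde K+(c+d-b)E$, split into the no-swap regime $a-b\geq b$ (the paper's case $m>1$) and the swap regime $a-b<b$ (the paper's case $m=1$), and close the induction with Euclidean-algorithm bookkeeping giving the total correction $\sum q_ir_{i-1}^2=ab$. Your preliminary checks (that $c+d\geq b$ in both cases, that $M_1^2=M^2-b^2$, and that $q_K$ is the only candidate base point) are all correct and match the paper.

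However, there is a gap in the case analysis, and it sits exactly where you yourself identified "the hard part": verifying that every transition lands back in Case~A or Case~B. There are four transitions to check, and you verify only three. In the swap regime you handle Case~B with $m=1$ going to Case~A (via $c+d\geq a\iff c+d-b\geq a-b$), but your remaining clause --- "the inequality $c+md-mb\geq q$ \dots takes the configuration at the end of the Case~B chain into Case~A" --- is just a restatement of that same transition. What is never verified is a Case~A configuration ($d\geq b$) entering the swap regime ($b\leq a<2b$, $a\neq b$): after the swap the new data are $(a',b',c',d')=(b,\,a-b,\,d,\,c+d-b)$, and these need \emph{not} satisfy Case~A. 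For instance $(a,b,c,d)=(5,4,0,4)$ satisfies Case~A, requires a swap at the first blowup, and produces $(4,1,4,0)$, for which $d'=0<b'=1$; only Case~B holds there. This transition is unavoidable (any Case~A chain with $b\nmid a$ eventually enters the swap regime, as do the Case~A configurations produced by your third transition), so without it the induction does not close. The fix is the one-line estimate the paper supplies at this point: since $d\geq b$ and $d'=c+d-b\geq 0$, one gets $c'+m'd'\geq c'=d\geq b=a'$, and $b'=a-b\neq 0$, so the swapped configuration satisfies the second bullet (Case~B). With that single verification added, your argument is complete and is essentially the paper's proof.
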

\begin{proof}
  We prove the lemma by induction on $(a,b)$ with $b\leq a$. Here we are considering the lexicographic order $\leq$ defined on the lower diagonal $\Delta^\geq:=\{(a,b) \colon a\geq b\}\subseteq \mathbb{N}\times \mathbb{N}$ as follows:
  \[
  (a',b')\leq (a,b) \text{\ if \ and \ only \ if \ }  a'< a  \text{ \ or \ } a'=a \text{ \ and \ } b'\leq b.
  \]
  In this case $\Delta^\geq$ admits the \textit{well-ordering principle} and so the \textit{principle of  mathematical induction} holds. \\
  Suppose that $(a,b)=0$. Then $\vert M\vert$ is base-point free and so $\hat{M}=M^2=M^2-ab$. Now suppose that the statement is true for $(a',b')<(a,b)$. Our aim is to prove it for $(a,b)$. We blow up the base-point $p$,  take the pullback of the divisors $D_i$ and remove the fixed part, which is the exceptional divisor  $bE$  of the blowup. In fact the pullback of $D_3$ contains $c+d$ times $E$ and $ c+d\geq b$, thanks to the assumptions $c+md\geq a$ or $d\geq b$:
  If $d\geq b$, then $c+d\geq b$, otherwise if $d<b$ and $c+md\geq a$, then 
  \[
  c+d-b\geq c+m(d-b)\geq c+md-a\geq 0.
  \]
  Restricted to  the preimage of our neighborhood of $p$,  these divisors are: 
\[
a\hat{H}+(a-b)E, \qquad  b\hat{K}\qquad \makebox{and} \qquad c\hat{H}+d\hat{K} +(c+d-b)E. 
\]
Here, $\hat{H}$ and $\hat{K}$  are the strict transforms of $H$ and $K$. Let $\vert \hat{M} \vert$ be the linear system generated 
by these three divisors, then $\hat{M}^2=M^2-b^2$. If $a=b$ or $b=0$, then $\vert \hat{M} \vert $ is base-point free and we are done. Otherwise, on the preimage, the linear system $\vert \hat{M} \vert$ has precisely one 
new base-point: the intersection point  of $\hat{K}$ and  $E$. Locally near this point the three divisors spanning $\vert \hat{M} \vert$ are: 
\[
(a-b)E, \qquad  b\hat{K}\qquad \makebox{and} \qquad d\hat{K} +(c+d-b)E.   
\]
We need to distinguish two cases, when $m=1$ or when $m> 1$. In the first case $a-b=q< b$, so that $(b,q)<(a,b)$. We can write $b=m'q+q'$, with $0\leq q' < q$ and define new coefficients $a':=b$, $b':=q$, $c':=d$ and $d':=c+d-b$. Then they fulfill the inductive hypothesis, because:

If $c+d\geq a$, then 
\[
d'=c+d-b\geq a-b=q=b',
\]
else if $d\geq b$, then 
\[
c'+m'd'\geq c'=d\geq b=a'.
\]
By induction, the self-intersection of the new linear system $\hat{M}$ is equal to 
\[
\hat{M}^2=(M^2-b^2)-qb=M^2-ab.
\]
In the case $m>1$, then $b\leq a-b $ and $(a-b,b)<(a,b)$. We define new variables $a':=a-b$, $b':=b$, $c':=c+d-b$ and $d':=d$. Observe that $a'=a-b=(m-1)b'+q$ and we can define $m':=m-1$. They satisfy the inductive hypothesis, because of the estimations:

If $c+md\geq a$, then 
\[
c'+m'd'=c+d-b+(m-1)d=c+md-b\geq a-b=a',
\]
else if $d\geq b$, then $d'\geq b'$.
Hence the self-intersection of the new linear system $\hat{M}$ is equal to 
\[
\hat{M}^2=(M^2-b^2)-(a-b)b=M^2-ab.
\]
\end{proof}
\begin{remark}\label{CorrTerDeg7}
Lemma \ref{FedericoLemma} fails if both of the conditions $c+md\geq a$ and  $d\geq b$ are not satisfied. For example, taking $a=3$, $b=2$ and $c=d=1$ we get that the correction term of $M^2-\hat{M}^2$ is $5$ instead of $6$.
\end{remark}

\begin{Example}\label{FedericoExample}
We illustrate   Lemma \ref{FedericoLemma}  by computing  the degree of the canonical map of the first surface  in our main Theorem \ref{MainTheo} again.
Recall that the mobile part of the canonical system  is generated by: 
\[
D_1:= 3F_2+4G_2, \quad 
D_2 :=F_1+2F_2+3G_0+G_1 \quad \makebox{and} \quad 
D_3:= F_0+2F_1+G_0+G_1+2G_2. 
\]
We determine the correction term to the self intersection number for each of the four base-points: 
\[
p_{12}, \quad p_{20}, \quad p_{21} \quad \makebox{and} \quad  p_{22}, \qquad \makebox{where} \quad \lbrace p_{ij} \rbrace =F_i\cap G_j. 
\] 
\begin{itemize}
\item
Around  $p_{12}$, the divisors $D_i$ are given by
$4G_2$, $F_1$   and $2F_1+2G_2$. In the notation of the Lemma 
$a=4$, $b=1$ and $c=d=2$. This implies  $c+md=10\geq 4$ and $d\geq q-1=-1$.  The correction term is  $ab=4$. 
\item
Around $p_{20}$ the divisors are 
$3F_2$,   $2F_2+ 3G_0$  and $G_0$.   
In this case  $a=3, b=1, c=2, d=3$ and the correction term is $ab=3$. 
\item
Around  $p_{21}$, we have 
$3F_2$, $2F_1+ G_1$  and $G_1$, which  yields $3$ as correction term. 
\item
Around  $p_{22}$ we have 
$3F_2+4G_2$, $2F_2$   and $2G_2$, thus the  correction term is $4$. 
\end{itemize}
The degree of the canonical map is therefore given by 
\[
\deg(\Phi_{K_S})=(3F_2+4G_2)^2 - 4-3-3-4= 10. 
\]

\end{Example}

\noindent 
For completeness, we point out: 

\begin{rem}
Our surfaces in Theorem  \ref{MainTheo} are examples of \emph{Beauville surfaces of unmixed type}, because $F/\mathbb Z_7^2\simeq \mathbb P^1$ and the  quotient cover   $\pi \colon F\to \mathbb P^1$ is branched in three points, as we  explained above. 
Beauville surfaces are precisely  the \emph{rigid surfaces isogenous to a product}, i.e. those  that  allow no non-trivial deformations \cite{BGV15}.
While  we cannot classify all regular surfaces isogenous to a product with $p_g=3$, it is possible to classify 
those that arise under an \emph{unmixed diagonal action of an abelian group}, thanks to the MAGMA  algorithm  \cite{MAGMA} from the paper   \cite{GPR}.  In particular, we know all 
\emph{unmixed Beauville surfaces} $S$ with \emph{abelian group}
and $p_g=3$. They form   seven biholomorphism  classes, which  can all   
be  realized as quotients of a product of two Fermat septics modulo $\mathbb Z_7^2$.  
Three of 
the four examples  which are not in
 the table 
of Theorem \ref{MainTheo} have generically finite canonical maps of degree $5$, $7$ and $14$, whilst the canonical map of the fourth  surface  is composed with a pencil. 
The  degrees of the  $4$th and $6$th surfaces of the table can be easily obtained  applying Lemma \ref{FedericoLemma}, in analogy to our computation in Example \ref{FedericoExample}. Regarding the $5$th surface the assumptions of the Lemma \ref{FedericoLemma} are fulfilled for all base-points, except for 
 $F_1\cap G_2=\lbrace p_{12}\rbrace$. Here locally the configuration of the three divisors spanning $\vert M \vert $ is as  in Remark \ref{CorrTerDeg7}:
 \[
 3F_1, \quad 2G_2 \qquad \makebox{and} \qquad  F_1+G_2. 
 \]
 Thus the correction term of $M^2-\hat{M}^2$ coming from $p_{12}$ is $5$ and we obtain $\deg(\Phi_{K_{S}}) =7$ using Lemma \ref{FedericoLemma}   for the other base-points. \\ 
Extending the table  of Theorem \ref{MainTheo}, we have:
 
 \[
\begin{tabular}{c | c | c | c | c }
\makebox{No} & $A$ & \makebox{Basis of $H^0(K_{S})$}  & $\Phi_{K_{S}}(x,y)$ & $\deg(\Phi_{K_{S}})$  \\
\hline
4. & $ \begin{pmatrix} 3&3 \\ 6 & 2\end{pmatrix} $ & $
\lbrace \omega_{0203}, \omega_{1004}, \omega_{3112} \rbrace$ &   
$(x_0^2x_2^2y_0y_2^3: x_0^3x_1y_2^4:x_1^3x_2y_0y_1y_2^2)$ & $5$ \\ 
\hline
5. & $ \begin{pmatrix} 5&4 \\ 6 & 5\end{pmatrix} $ & $\lbrace \omega_{1022}, \omega_{2131}, \omega_{4010} \rbrace$ &   
$(x_0^3x_1y_1^2y_2^2:x_0x_1^2x_2y_1^3y_2: x_1^4y_0^3y_1)$ & $7$ \\ 
\hline
6. & $ \begin{pmatrix} 1&1 \\ 6 & 2\end{pmatrix} $ & 
$\lbrace  
\omega_{0101},
\omega_{1313},
\omega_{3030} \rbrace$  &  
$(x_0^3x_2y_0^3y_2: x_1x_2^3y_1y_2^3: x_0x_1^3y_0y_1^3)$ &  $14$ \\ 
\hline
7. & $ \begin{pmatrix} 2&2 \\ 6 & 3\end{pmatrix} $ & $\lbrace \omega_{0202}, \omega_{2121}, \omega_{4040} \rbrace$ &   
$(x_0^2x_2^2y_0^2y_2^2: x_0x_1^2x_2y_0y_1^2y_2: x_1^4y_1^4)$ & ${\rm im}(\Phi_{K_{S_7}})= \lbrace y^2=xz \rbrace \subset \mathbb P^2$ \\ 
\hline
\end{tabular}
\]

\end{rem}

\bigskip
\bigskip

\end{document}